\documentclass[12pt]{article}
\usepackage{graphicx,amsfonts,amsmath,enumitem,tikz}
\usepackage{amssymb}
\usepackage{amscd}
\usepackage{comment}
\usepackage{verbatim}
\usetikzlibrary{arrows,automata,shapes,snakes}
\usepackage{amsfonts}
\usepackage{setspace}
\usepackage{mathtools}

\DeclarePairedDelimiter{\ceil}{\lceil}{\rceil}

\def\var{{\rm Var}}
\def\cov{{\rm Cov}}
\def\ba{{\bf a}}
\def\dtv{{d_{\rm TV}}}
\def\calL{{\cal L}}
\def\Po{{\rm Po}}

\def\var{{\rm Var}}

\def\reel{\hbox{{\rm R}\kern-1em\hbox{{\rm I} }}}
\def\relatif{\ \hbox{{\rm Z}\kern-.4em\hbox{\rm Z}}}
\def\nat{\hbox{{\rm N}\kern-1em\hbox{{\rm I} } }}
\def\comp{\hbox{{\rm C}\kern-.55em\hbox{{\rm I} } }}
\def\smallcomp{\hbox{\fiverm C}\kern-.35em{\hbox{\fiverm I}}}

\def\fudge{\mathchoice{}{}{\mkern.5mu}{\mkern.8mu}}
\def\bbc#1#2{{\rm \mkern#2mu\vbar\mkern-#2mu#1}}
\def\bbb#1{{\rm I\mkern-3.5mu #1}} \def\bba#1#2{{\rm #1\mkern-#2mu\fudge
#1}}
\def\bb#1{{\count4=`#1 \advance\count4by-64 \ifcase\count4\or\bba
A{11.5}\or \bbb B\or\bbc C{5}\or\bbb D\or\bbb E\or\bbb F \or\bbc
G{5}\or\bbb H\or \bbb I\or\bbc J{3}\or\bbb K\or\bbb L \or\bbb
M\or\bbb N\or\bbc O{5} \or \bbb P\or\bbc Q{5}\orrrr \bb R\or\bbc
S{4.2}\or\bba T{10.5}\or\bbc U{5}\or    \bba V{12}\or\bba
W{16.5}\or\bba X{11}\or\bba Y{11.7}\or\bba Z{7.5}\fi}}
\def\rat{\hbox{{\rm Q}\kern-.70em\hbox{{\rm I} } }}

\newcommand{\refm [1]} {(\ref{#1})}

\def\BP{{\Bbb P}}
\def\BP{{\Bbb P}}
\def\BR{{\Bbb R}}

\def\BE{{\Bbb E}}
\def\BZ{{\Bbb Z}}

\newcounter{theorem}

\newcounter{theoremcounter}
\newcounter{lemmacounter}
\newcounter{problemcounter}

\newtheorem{theorem}[theoremcounter]{Theorem}
\newtheorem{lemma}[lemmacounter]{Lemma}
\newtheorem{prob}[problemcounter]{Problem}

\newcommand{\eq}{\begin{equation}}
\newcommand{\en}{\end{equation}}
\newcommand{\ignore}[1]{}
\newcommand{\be}{\begin{equation}}
\newcommand{\ber}{\begin{eqnarray}}

\def\proof{\par\noindent{\bf Proof.\enspace}\rm}
\def\blackslug{\hbox{\kern1pt\vrule height6pt width4pt  depth1pt\kern1pt}}
\def\qed{\penalty 500\hbox{\quad\blackslug}\ifmmode\else\par
         \vskip4.5pt plus3pt minus2pt\fi}

\def\bbb#1{{\rm I\mkern-3.5mu #1}} \def\bba#1#2{{\rm #1\mkern-#2mu\fudge
#1}}

\newcommand{\ena}{\end{eqnarray}}

\newtheorem{corollary}{Corollary}
\begin{document} 
%\title{A phase transition for random injections}
\title{Poisson approximations of the number of fixed points in random  multiset permutations}

%\author{Dudley Stark\thanks{Queen Mary, University of London, Mile End, London E1 4NS, UK}}
\author{Dudley Stark\\School of Mathematical Sciences\\Queen Mary, University of London\\ London E1 4NS, UK}
\date{}
\maketitle
\begin{abstract}
Multisets are like sets, except that they can contain multiple copies of their elements. 
%If there are $a_i$ copies of $i$, $1\leq i\leq k$, in multiset $M$, then there are
%$\binom{a_1+\cdots+a_t}{a_1,\ldots, a_t}$ possible permutations (rearrangements)
 %of $M$.
%An element of $M$ is a fixed point of a permutation of $M$ if a copy 
%of that element appears in its position.
For ordinary permutations on $n$ letters, the distribution of the number of fixed points of random permutations is well known to approach the Poisson$(1)$ distribution in total variation distance as $n\to\infty$ super-exponentially quickly.  We use Stein's method to get related results for the number of fixed points of random permutations of multisets.
Given a sequence of multisets on $n$ letters whose expected number of fixed points 
converges to a constant $c$, we must have $c\geq 1$ and the distribution of number
of fixed points converges to the ${\rm Poisson}(c)$ distribution as $n\to\infty$. If $c>1$, then the rate of convergence in total variation distance can be as slow as $n^{-1/2}$.

%We use Stein's method to approximate the distribution of the number of fixed points of multiset permutations by the appropriate Poisson distribution in total variation distance and show Poisson convergence as $n\to\infty$ for multisets on $n$ letters when
%the expected number of fixed points is bounded.
% and Wasserstein distance, respectively. To be effective, both approximations require $\max_{1\leq i\leq t}a_i=o(n)$, in which case the mean and variance of $W$ are both asymptotic to $\sum_{i=1}^t a_i^2/n$. {\bf is this true?}

\end{abstract}

\section{Introduction}

A permutation of the set $[n]:=\{1,\ldots,n\}$ with no fixed points is called a {\em derangement} if it has no fixed points.  Let $D(n)$ denote the number of derangements.
It is well known \cite{Stanley} that 
\eq\label{incexc}
D(n)=
n!\sum_{j=0}^n \frac{(-1)^j}{j}.
\en

Given a permutation chosen uniformly at random from all permutations of the set
 $[n]:=\{1,\ldots,n\}$.
denote the number of fixed points by $W$. A permutation is a derangement if and only if $W=0$.
Given probability measures $\mu_1$ and $\mu_2$ on the set $\BZ_+:=\{0,1,2,\ldots\}$,
The total variation distance between them is defined by
$$
\dtv(\mu_1,\mu_2):=
\sup\{|\mu_1(A)-\mu_2(A)|:A\in\BZ^+\}=
\frac{1}{2}\sum_{j=0}^\infty
|\mu_1(j)-\mu_2(j)|.
$$
In \cite{BHJ}  Example 3.4.2 (The classical recontre or matching problem) it is shown 
using Stein's method that
\eq\label{chen}
\dtv(\calL(W),\Po(1))\leq 2(1-e^{-1})/n,
\en
where Po denotes a Poisson distribution and $\calL(W)$ denotes the probability distribution of $W$. It is also stated that it is easy to show that
\eq\label{fast}
\dtv(\calL(W),\Po(1))\sim\frac{2^n}{(n+1)!},
\en
which shows that the rate of convergence is super-exponentially fast.

Multisets are like sets, except that they can contain multiple copies of their elements. 
Let $\ba=(a_1,\ldots,a_k)$, $a_i\in\BZ^+$ for $1\leq i\leq k$. The multiset $M_\ba$ contains $a_i$ copies of i for all  $1\leq i\leq k(n)$. We also write
$M_\ba=\{1^{a_1},\cdots, k^{a_k}\}$.
As a multiset $[n]$ is defined by $a_i=1$ for all $i\in [n]$.

Define
\eq\label{ndef}
n=\sum_{i=1}^k a_i.
\en
In (\ref{ndef}), $n$ is derived from $\ba$. However, in getting asymptotic results, we will let $n$ be a parameter and let $\ba(n)$ be a sequence of multisets such that
\eq\label{assump}
\ba=\ba(n) {\rm \ satisfies \ } (\ref{ndef}) \ \forall n\geq 1
\en
We will then let
$n\to\infty$. 

A {\em derangment}
of $M_\ba$ is a permutation $\alpha_1\alpha_2\cdots \alpha_n$ (where $n=\sum_{i=1}^n a_i$) of $M_\ba$ that disagrees in each of the $n$ positions with the permutation we get by listing
the elements of $M_\ba$ in weakly increasing order. For instance, the multiset
$\{1,2^2,3\}$ has two derangements: $2132$ and $2312$. Let $D(\ba)$ denote
the number of derangements of $M_\ba$. Problem 2.12 of \cite{Stanley} and its solution
%page 256, soluition page 269
shows that
\eq\label{stanex}
\sum_{\ba\in \BZ_+^k} D(\ba) x^\ba=
\frac{1}{1-\sum_S (|S|-1)\prod_{i\in S}x_i},
\en
where $x^\ba=\prod_{i=1}^k x_i^{a_i}$, $S$ ranges over all nonempty subsets of 
$[n]$, and $|S|$ is the size of $S$. The solution to (\ref{stanex}) starts with the
inclusion-exclusion expression
$$
D(\ba)=
\sum_{b_1=0}^{a_1}\cdots\sum_{b_k=0}^{a_k}
\binom{a_1}{b_1}\cdots\binom{a_k}{b_k}(-1)^{b_1+\cdots+b_k}
\binom{\sum_{i=0}^k (a_i-b_i)}{a_1-b_1,\ldots,a_k-b_k},
$$
which generalises (\ref{incexc}).

%We will call the permutation we get by listing
%the elements of $M_\ba$ in weakly increasing order the {\em original} permutation.
We will say that a {\em fixed point} of a permutation of $M_\ba$ is a position
which agrees with the permutation we get by listing
the elements of $M_\ba$ in weakly increasing order.
%An element of $M_\ba$ is a fixed point of a permutation $\pi$ of $M$ if a copy 
%of that element appears in its position.
For example, the permutation $231121$ of the multiset $\{1^3,2^2,3^1\}$ has fixed points at positions 3 and 5.

Let $W=W(\ba)$ denote the number of fixed points of a permutation chosen uniformly at random from the set $S_\ba$ of all
$\binom{n}{a_1,\ldots,a_k}$ permutations. In this paper, we will derive Poisson approximations for $W$ by using Stein's method.
Thus, we are investigating the recontre or matching problem for multisets. 
We will be able to show Poisson convergence as $n\to\infty$, under assumption (\ref{assump}) on $\ba=\ba(n)$ and assuming that the expected number of fixed points
does not grow too quickly.

Other work has been done on random multiset permutations.
Diaconis and Fulman \cite{DF} study card shuffling on multisets, which amounts to Markov chains on 
multiset permutations.
We consider this work as continuing Stark \cite{Stark}, in which a process generating permutations of multisets was used to find a central limit theorem for the number of 
cycles in a random permutation, where permutation factorisation is defined in
Knuth \cite{Knuth}.

We derive expressions for $\BE W$ and ${\rm Var}(W)$ in Section 2.  Poisson approximations are found in Section 3. Lower bounds
on the poisson approximations are obtained in Section 4, showing that we do not generally have 
the fast convergence seen in (\ref{fast}).

Given sequences of nonnegative numbers $a_n$ and $b_n$, we define $a_n=O(b_n)$ to mean that there is a constant $C>0$ such that $a_n\leq Cb_n$;
we define $a_n=o(b_n)$ to mean that there is a constant $C>0$ such that 
$\lim_{n\to\infty}a_n/b_n=0$;
and we define $a_n\asymp b_n$ to mean that there are constants $C_2>C_1>0$ such that 
$C_1b_n\leq a_n\leq C_2 b_n$.
% and we define $a_n=\Theta(b_n)$ to mean that there is a constant $C_1>0$ such that 
%$a_n\geq C_1 b_n$.

\section{Mean and variance of $W$}\label{expvar}

We will first show that
\begin{lemma}\label{exp}
\eq\label{lambdaexp}
\lambda:=\BE W = \sum_{i=1}^k \frac{a_i^2}{n}.
\en
\end{lemma}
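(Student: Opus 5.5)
We write $W$ as a sum of position indicators and use linearity of expectation. Let $\pi=\alpha_1\cdots\alpha_n$ be uniform on $S_\ba$, and let $\beta_1\cdots\beta_n$ be the reference permutation obtained by listing the elements of $M_\ba$ in weakly increasing order. Thus $\beta_j=i$ exactly for the $a_i$ positions $j$ in the $i$th block, namely $a_1+\cdots+a_{i-1}<j\leq a_1+\cdots+a_i$. Put $I_j=1\{\alpha_j=\beta_j\}$, so that $W=\sum_{j=1}^n I_j$ and $\BE W=\sum_{j=1}^n \BP(\alpha_j=\beta_j)$.

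The key step is the marginal law of a single entry: for every position $j$ and every letter $i$,
$$
\BP(\alpha_j=i)=\frac{a_i}{n}.
$$
We can prove this by counting. Fix $\alpha_j=i$; the remaining $n-1$ positions then carry a permutation of the multiset with $a_i$ replaced by $a_i-1$. The number of such permutations is $\binom{n-1}{a_1,\ldots,a_i-1,\ldots,a_k}$. Dividing by $\binom{n}{a_1,\ldots,a_k}$ gives exactly $a_i/n$.

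An alternative route is symmetry. The uniform measure on $S_\ba$ is the image of the uniform measure on ordinary permutations of $[n]$, after relabelling $a_i$ distinct symbols as copies of $i$. Hence $\alpha_j$ is a uniformly chosen element of $M_\ba$, which again gives $a_i/n$.

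It remains to sum by blocks. Each of the $a_i$ positions in block $i$ contributes $a_i/n$, so
$$
\BE W=\sum_{i=1}^k a_i\cdot\frac{a_i}{n}=\sum_{i=1}^k\frac{a_i^2}{n},
$$
which is (\ref{lambdaexp}). The only step needing care is the marginal computation, and it is routine. The same indicator decomposition, applied to pairs of positions, is what we expect to drive the variance computation.
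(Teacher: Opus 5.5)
Your proof is correct and matches the paper's: the paper also writes $W$ as a sum of position indicators $I_{i,j}$, computes $\BP(I_{i,j}=1)=\binom{n-1}{a_1,\ldots,a_i-1,\ldots,a_k}\big/\binom{n}{a_1,\ldots,a_k}=a_i/n$, and sums over the $a_i$ positions of each block. Your symmetry argument is a valid alternative justification of the same marginal, since each multiset permutation has exactly $a_1!\cdots a_k!$ preimages among permutations of $[n]$.
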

\proof
It is helpful to label the $n$ positions of the original permutation by the elements of the
set 
\eq\label{Gammadef}
\Gamma:=\{(i,j):i\in [k], \, j\in [a_i]\}, 
\en
with the elements of $\Gamma$ in lexicographic order. Thus, the positions $(i,j), j\in[a_i]$ are precisely the positions of the original permutation containing an $i$, for each $i\in [k]$. We define the indicator random variables
$I_{i,j}$ so that 
\eq\label{Iijdef}
I_{i,j}=
\left\{
\begin{array}{l l}
1& {\rm if \ the \ } (i,j) {\rm th \ position \ of \ the \ permutation \ is \ } i\\
0&{\rm otherwise}.
\end{array}
\right.
\en
We have
$$
W=\sum_{i=1}^k\sum_{j=1}^{a_i} I_{i,j}
$$
and also
\eq\label{Iijexp}
\BP(I_{i,j}=1)=\frac{\binom{n-1}{a_1,\ldots,a_{i-1},a_i-1,a_{i+1},\ldots, a_k}}
{\binom{n}{a_1,\ldots, a_k}}=\frac{a_i}{n}.
\en
The expression for $\BE W$ follows immediately.
\qed
Since $a_1^2+a_2^2\leq (a_1+a_2)^2$,  the minimum of $(\ref{lambdaexp})$ is $
\lambda=1$ which occurs when $a_1=1,\ldots,a_n=1$ (the case of  (\ref{chen}) and (\ref{fast}).
The maximum of $(\ref{lambdaexp})$ is $
\lambda=n$ which occurs when $a_1=n$.
Given a real number $x$, let $\lceil x\rceil$ denote the smallest integer greater or equal to 
$x$.
Let $c> 1$ be fixed.
An example satisfying (\ref{assump}) for where $\lambda$ converges to $c$ is
$a_1=\ceil[\Big]{\sqrt{(c-1)n}}, a_2=1, a_3=1,\ldots, a_{n-\ceil[\big]{\sqrt{(c-1)n}}+1}=1$.

The variance of $W$ is given by the next lemma.
\begin{lemma}\label{var}
Assume $n>1$, the case $n=1$ being trivial. Then,
$$
\sigma^2:=\var(W) = \sum_{i=1}^k \frac{a_i^2}{n}\left(1-\frac{a_i}{n}\right)
\left(1-\frac{a_i-1}{n-1}\right)
+\frac{2}{n^2(n-1)}\sum_{1\leq i<j\leq k}a_i^2a_j^2.
$$
\end{lemma}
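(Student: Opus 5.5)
We will compute $\var(W)$ from the indicator decomposition $W=\sum_{(i,j)\in\Gamma}I_{i,j}$ introduced in the proof of Lemma~\ref{exp}, using
$$
\var(W)=\sum_{(i,j)\in\Gamma}\var(I_{i,j})+\sum_{(i,j)\neq(i',j')}\cov(I_{i,j},I_{i',j'}),
$$
where the second sum runs over ordered pairs of distinct positions. The diagonal terms come directly from (\ref{Iijexp}): $\var(I_{i,j})=\frac{a_i}{n}\left(1-\frac{a_i}{n}\right)$. For each $i$ there are $a_i$ such positions, so the diagonal contributes $\sum_i \frac{a_i^2}{n}\left(1-\frac{a_i}{n}\right)$.

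For the off-diagonal terms we need the joint probabilities, computed exactly as in (\ref{Iijexp}) by counting multinomial coefficients. Fixing an $i$ in two positions $(i,j),(i,j')$ with $j\neq j'$ leaves $\binom{n-2}{\ldots,a_i-2,\ldots}$ arrangements, so the probability is $\frac{a_i(a_i-1)}{n(n-1)}$. Fixing $i$ at $(i,j)$ and $l\ne i$ at $(l,j')$ gives probability $\frac{a_ia_l}{n(n-1)}$. From these the covariances are
$$
\frac{a_i(a_i-1)}{n(n-1)}-\frac{a_i^2}{n^2}=-\frac{a_i(n-a_i)}{n^2(n-1)}\quad\text{(same letter)},\qquad
\frac{a_ia_l}{n(n-1)}-\frac{a_ia_l}{n^2}=\frac{a_ia_l}{n^2(n-1)}\quad\text{(different letters)}.
$$

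Next we count the ordered pairs. There are $a_i(a_i-1)$ ordered same-letter pairs for letter $i$, and $a_ia_l$ ordered pairs for each ordered pair of distinct letters $(i,l)$. The different-letter pairs therefore contribute $\sum_{i\neq l}\frac{a_i^2a_l^2}{n^2(n-1)}=\frac{2}{n^2(n-1)}\sum_{i<l}a_i^2a_l^2$, which is the second term of the statement.

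The only step requiring care is the algebra for letter $i$. Here we must combine the diagonal term with the same-letter covariance contribution $-a_i(a_i-1)\frac{a_i(n-a_i)}{n^2(n-1)}$. Factoring out $\frac{a_i^2}{n}\left(1-\frac{a_i}{n}\right)=\frac{a_i^2(n-a_i)}{n^2}$, what remains is $1-\frac{a_i-1}{n-1}$, which yields exactly the first sum in Lemma~\ref{var}. The hypothesis $n>1$ is used only so that $n-1\neq0$ in these joint probabilities.
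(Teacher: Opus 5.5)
Your proposal is correct and follows the paper's proof essentially step by step. You use the same split of $\var(W)$ into diagonal terms, same-letter covariances and different-letter covariances, the same multinomial computation of the joint probabilities, and the same final combination of the diagonal and same-letter terms; the only cosmetic difference is that you sum over ordered pairs where the paper sums over unordered pairs with a factor of $2$.
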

\proof
We write
\eq\label{varexp}
\var(W)=\sum_{i=1}^k\sum_{j=1}^{a_i}\var(I_{i,j})
+\sum_{i=1}^k \sum_{1\leq j_1<j_2\leq a_i}2\cov(I_{i,j_1}, I_{i,j_2})
+
\sum_{1\leq i_1<i_2\leq k} \sum_{j_1=1}^{a_{i_1}} \sum_{j_2=1}^{a_{i_2}}
2\cov(I_{i_1,j_1}, I_{i_2,j_2})
\en
The first term in (\ref{varexp}) equals
\eq\label{covterms0}
 \sum_{i=1}^k\sum_{j=1}^{a_i}\var(I_{i,j})
=  \sum_{i=1}^k\sum_{j=1}^{a_i}\frac{a_i}{n}\left(1-\frac{a_i}{n}\right)
=  \sum_{i=1}^k \frac{a_i^2}{n}\left(1-\frac{a_i}{n}\right).
\en
We need to calculate the covariances appearing in the second term of (\ref{varexp}).
We have
$$
\BE(I_{i,,j_1}I_{i,,j_2})=\BP(I_{i,,j_1}=1,I_{i,,j_2}=1)=
\frac{\binom{n-2}{a_1,\ldots,a_{i}-2,\ldots, a_k}}
{\binom{n}{a_1,\ldots, a_k}}=\frac{a_i(a_i-1)}{n(n-1)}.
$$
Therefore,
$$
\sum_{i=1}^k \sum_{1\leq j_1<j_2\leq a_i}2\cov(I_{i,j_1}, I_{i,j_2})=
2\sum_{i=1}^k\binom{a_i}{2}\left(\frac{a_i(a_i-1)}{n(n-1)}-\frac{a_i^2}{n^2}\right)
$$
Some algebra shows that
\eq\label{covterms1}
\sum_{i=1}^k \sum_{1\leq j_1<j_2\leq a_i}2\cov(I_{i,j_1}, I_{i,j_2})=
-\sum_{i=1}^k\frac{a_i^2(a_i-1)}{n(n-1)}\left(1-\frac{a_i}{n}\right)
\en
For the last term of (\ref{varexp}), we have
$$
\BE(I_{i_1,,j_1}I_{i_2,,j_2})=\BP(I_{i_1,,j_1}=1,I_{i_2,,j_2}=1)=
\frac{\binom{n-2}{a_1,\ldots,,a_{i_1}-1,\ldots,a_{i_2}-1,\ldots, a_k}}
{\binom{n}{a_1,\ldots, a_k}}=\frac{a_{i_1}a_{i_2}}{n(n-1)},
$$
from which
\begin{eqnarray}\label{covterms2}
\sum_{1\leq i_1<iI_2\leq k} \sum_{j_1=1}^{a_{i_1}} \sum_{j_2=1}^{a_{i_2}}2\cov(I_{_1,j_1},I_{i_2,j_2})&=&
\sum_{1\leq i_1<j_2\leq k} \sum_{j_1=1}^{a_{i_1}} \sum_{j_2=1}^{a_{i_2}}2\left(\frac{a_{i_1}a_{i_2}}{n(n-1)}
-\frac{a_{i_1}a_{i_2}}{n^2}\right)\nonumber\\
&=&
\sum_{1\leq i_1<j_2\leq k} 2\left(\frac{a_{i_1}^2a_{i_2}^2}{n(n-1)}
-\frac{a_{i_1}^2a_{i_2}^2}{n^2}\right)\nonumber\\
&=&\frac{2}{n^2(n-1)}\sum_{1\leq i<j\leq k}a_i^2a_j^2.
\end{eqnarray}
Insertion of (\ref{covterms0}), (\ref{covterms1}) and (\ref{covterms2}) into (\ref{varexp}) and a manipulation
results in the stated formula for $\sigma^2$.
\qed

%We next get a bound on  $\sigma^2$ which will be useful in Section~\ref{lower}.
%\begin{lemma}
%For all $a_1,\ldots,a_k$, $\sum_{i=1}^k=1$, it holds that
%$$
%\sigma^2\leq\lambda-\frac{n}{k^2}+O\left(n^{-1}\right).
%$$
%\end{lemma}
%\proof
%junk
%\qed
%Note that if $k=o(n)$, then for $n$ large enough $\sigma^2<\lambda$.

In the remainder of this section assume (\ref{assump}).

Define
$$
\theta:=\max_{1\leq i\leq k} a_i.
$$
We will use the following inequality often, which follows from By Lemma~\ref{exp}:
\eq\label{thetabound}
\theta\leq\sqrt{n\lambda}
\en

We can show the following.
\begin{lemma}\label{simlam}
Suppose that 
%$\lambda=\Theta(1)$ and 
$\lambda=o(n)$.  
Then,
$
\sigma^2\sim\lambda.
$
\end{lemma}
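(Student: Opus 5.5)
We start from the formula of Lemma~\ref{var} and show that each piece differs from $\lambda$ by $o(\lambda)$. Write
$$
\sigma^2=\lambda - E_1 + E_2,\qquad
E_1:=\sum_{i=1}^k \frac{a_i^2}{n}\left[1-\left(1-\frac{a_i}{n}\right)\left(1-\frac{a_i-1}{n-1}\right)\right],\qquad
E_2:=\frac{2}{n^2(n-1)}\sum_{1\leq i<j\leq k}a_i^2a_j^2 .
$$
Both $E_1$ and $E_2$ are nonnegative. It therefore suffices to prove $E_1=o(\lambda)$ and $E_2=o(\lambda)$.

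For $E_1$, expand the bracket. It equals $\frac{a_i}{n}+\frac{a_i-1}{n-1}-\frac{a_i(a_i-1)}{n(n-1)}$, which is at most $\frac{a_i}{n}+\frac{a_i-1}{n-1}\leq \frac{2\theta}{n-1}$. Hence
$$
E_1\leq \frac{2\theta}{n-1}\sum_i\frac{a_i^2}{n}=\frac{2\theta}{n-1}\,\lambda .
$$
By (\ref{thetabound}), $\theta\leq\sqrt{n\lambda}$, so $\theta/(n-1)=O(\sqrt{\lambda/n})$. This tends to $0$ because $\lambda=o(n)$, and so $E_1=o(\lambda)$.

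For $E_2$, use $\sum_{i<j}a_i^2a_j^2\leq \frac12\left(\sum_i a_i^2\right)^2=\frac12 n^2\lambda^2$. This gives
$$
E_2\leq \frac{n^2\lambda^2}{n^2(n-1)}=\frac{\lambda^2}{n-1}=\lambda\cdot\frac{\lambda}{n-1}=o(\lambda),
$$
again by $\lambda=o(n)$. Combining the two bounds yields $\sigma^2=\lambda(1+o(1))$, as claimed.

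There is no serious obstacle here. The only point needing care is the $E_1$ term: the crude bound $a_i\leq n$ would not suffice, so one must invoke (\ref{thetabound}) to get $\theta=o(n)$ from $\lambda=o(n)$. Note also that $\lambda\geq1$, so no degenerate case arises.
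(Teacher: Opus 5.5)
Your proof is correct and follows essentially the same route as the paper: both start from Lemma~\ref{var}, use (\ref{thetabound}) to get $\theta=o(n)$, show the diagonal correction is $O(\theta/n)$ relative to $\sum_i a_i^2/n=\lambda$, and show the cross term is $o(\lambda)$. The only difference is in the cross term. The paper bounds it via $\sum_i a_i^2\le\theta n$ to get $O(\theta\lambda/n)$, while you use $\lambda=o(n)$ directly to get $\lambda^2/(n-1)$; your bound is slightly sharper, since $\lambda\le\theta$.
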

\proof
It follows from $\lambda=o(n)$ and (\ref{thetabound}) that $\theta=o(n)$.

By Lemma~\ref{var},
\begin{eqnarray}\label{step}
\sigma^2&=&
\sum_{i=1}^k \frac{a_i^2}{n}\left(1-\frac{a_i}{n}\right)
\left(1-\frac{a_i-1}{n-1}\right)
+\frac{2}{n^2(n-1)}\sum_{1\leq i<j\leq k}a_i^2a_j^2\nonumber\\
&=&
\sum_{i=1}^k \frac{a_i^2}{n}\left(1+O\left(\frac{\theta}{n}\right)\right)
+
\frac{1}{n^2(n-1)}O\left(\left(\sum_{i=1}^k a_i^2\right)^2\right)\nonumber\\
&=&
\sum_{i=1}^k \frac{a_i^2}{n}\left(1+O\left(\frac{\theta}{n}\right)\right)
+
\frac{\theta}{n(n-1)}O\left(\sum_{i=1}^k a_i^2\right)\\
&=&
\sum_{i=1}^k \frac{a_i^2}{n}\left(1+O\left(\frac{\theta}{n}\right)\right)\nonumber\\
&\sim&\lambda,\nonumber
\end{eqnarray}
using Lemma~\ref{exp},
where we have used $\sum_{i=1}^k a_i^2\leq\theta\sum_{i=1}^k a_i=\theta n$
at (\ref{step}). 
\qed

If $\lambda\asymp 1$, 
then (\ref{thetabound}) shows that
$
\theta =O\left(\sqrt{n}\right)=o(n)
$
and so, by Lemma~\ref{simlam}, we can expect to have Poisson convergence. In fact,  Corollary~\ref{rate} shows that we have Poisson convergence when 
$\lambda=o(n^{1/3})$.

%If $\lambda=o(n)$ and $\lambda\to\infty$ as $n\to\infty$, then $\sigma^2\sim\lambda$. 
An example for where $\lambda$ grows faster than $o(n^{1/3})$  is
$a_1=\lceil n^{1/3}\rceil, a_2=1, a_3=1,\ldots, a_{n-\lceil n^{1/3}\rceil+1}=1$
for which $\sigma^2\sim\lambda\sim n^{1/3}$.
%It might be expected that if $\sigma^2\to\infty$, then $\calL(W)$ has normal convergence.
Examples of multisets for which $\lambda\asymp n$ are $a_i=c_i n$, $1\leq i\leq k$, 
$k$ and $c_i$ fixed, $\sum_{i=1}^k c_i=1$. We have
$$
\lambda=n\sum_{i=1}^k c_i^2 
$$
and
$$
\sigma^2=n\left(\sum_{i=1}^k c_i^2(1-c_i)^2 
+2\sum_{1\leq i<j\leq k}c_i^2c_j^2\right) +O(1).
$$
%We will not be able to show normal convergence when
%$\sigma^2\to\infty$.

%{\bf We can also get approximations for $W_i$, the number of fixed points restricted to the  copies of $i$.}

\section{Poisson approximations}\label{poisson}

We will now derive a Poisson approximation for $\calL(W)$ by using the framework of \cite{BHJ}, which we will now present. 

Suppose that $W=\sum_{\alpha\in\Gamma} I_\alpha$, where $I_\alpha$ are indicator random variables with expectations $\pi_\alpha$ and $\Gamma$ is an index set. Suppose, moreover, that there exist
random variables $(J_{\beta\alpha}, \beta\in\Gamma)$ on the same probability space as
$(I_\alpha, \alpha\in\Gamma)$ with
\eq\label{coupling}
\calL(J_{\beta\alpha}; \beta\in\Gamma)=
\calL(I_\beta;\beta\in\Gamma|I_\alpha=1).
\en
Suppose that, for each $\alpha\in\Gamma$, the set 
$$
\Gamma_\alpha:=\Gamma\setminus\{\alpha\}
$$
 is partitioned inro
$\Gamma_\alpha^-$, $\Gamma_\alpha^+$, $\Gamma_\alpha^0$ in such a way that
\eq\label{mono}
\begin{array}{ c c c}
J_{\beta\alpha}\geq I_\alpha &{\rm if \ }\beta\in\Gamma_\alpha^+;\\
J_{\beta\alpha}\leq I_\alpha &{\rm if \ }\beta\in\Gamma_\alpha^-;\\
{\rm no \ restriction}&{\rm if \ }\beta\in\Gamma_\alpha^0.
\end{array}
\en
%$J_{\beta\alpha}\leq I_\alpha$ if $\beta\in\Gamma_\alpha^-$;
%and with no condition if $\beta\in\Gamma_\alpha^0$.
Under these assumptions we have the following Poisson approximation theorem.
\begin{theorem}[Theorem 2.C of \cite{BHJ}]\label{SC}
If there exists a coupling satisfying (\ref{coupling}) and (\ref{mono}), then
\begin{eqnarray*}
\dtv(\calL(W),\Po(\lambda)&\leq&\frac{1-e^{-\lambda}}{\lambda}
\left(\sum_{\alpha\in\Gamma} \pi_\alpha^2
+\sum_{\alpha\in\Gamma}\sum_{\beta\in\Gamma_\alpha^-}|\cov(I_\alpha,I_\beta)|\right.\\
&&+ \left.\sum_{\alpha\in\Gamma}\sum_{\beta\in\Gamma_\alpha^+}\cov(I_\alpha,I_\beta)
+ \sum_{\alpha\in\Gamma}\sum_{\beta\in\Gamma_\alpha^0}\left(\pi_\alpha\pi_\beta + \BE(I_\alpha I_\beta\right)\right).
\end{eqnarray*}
\end{theorem}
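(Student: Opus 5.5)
The plan is the standard Stein--Chen argument combined with the coupling (\ref{coupling}). Two remarks first. I read the monotonicity conditions in (\ref{mono}) as $J_{\beta\alpha}\geq I_\beta$ for $\beta\in\Gamma_\alpha^+$ and $J_{\beta\alpha}\leq I_\beta$ for $\beta\in\Gamma_\alpha^-$; this is the form used in \cite{BHJ}, and the subscript $\alpha$ in (\ref{mono}) appears to be a typo. I also take the unstated usual convention $\lambda=\sum_{\alpha\in\Gamma}\pi_\alpha=\BE W$.

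Fix $A\subseteq\BZ_+$ and let $g=g_A$ solve the Stein equation
$$
\lambda g(j+1)-jg(j)=1_A(j)-\Po(\lambda)(A),\qquad j\in\BZ_+ .
$$
Then $\BP(W\in A)-\Po(\lambda)(A)=\BE[\lambda g(W+1)-Wg(W)]$. I will assume the standard smoothness bound for the Stein solution,
$$
\|\Delta g\|:=\sup_j|g(j+1)-g(j)|\leq\frac{1-e^{-\lambda}}{\lambda}.
$$
This bound is proved in \cite{BHJ} by writing $g_A$ as a sum of $g_{\{j\}}$ and checking signs. It is the only analytic input, and it supplies exactly the prefactor in the statement.

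Next, use the coupling to rewrite the Stein expression. Since $W=\sum_\alpha I_\alpha$,
$$
\BE[Wg(W)]=\sum_\alpha\pi_\alpha\BE[g(W)\mid I_\alpha=1].
$$
By (\ref{coupling}), conditionally on $I_\alpha=1$ the variable $W$ has the law of $V_\alpha+1$, where $V_\alpha:=\sum_{\beta\in\Gamma_\alpha}J_{\beta\alpha}$. Hence
$$
\BE[\lambda g(W+1)-Wg(W)]=\sum_\alpha\pi_\alpha\,\BE[g(W+1)-g(V_\alpha+1)].
$$
Telescoping gives $|g(W+1)-g(V_\alpha+1)|\leq\|\Delta g\|\,|W-V_\alpha|$. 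Moreover
$$
W-V_\alpha=I_\alpha+\sum_{\beta\in\Gamma_\alpha}(I_\beta-J_{\beta\alpha}),
$$
and so, by the triangle inequality,
$$
|W-V_\alpha|\leq I_\alpha+\sum_{\beta\in\Gamma_\alpha^+}(J_{\beta\alpha}-I_\beta)+\sum_{\beta\in\Gamma_\alpha^-}(I_\beta-J_{\beta\alpha})+\sum_{\beta\in\Gamma_\alpha^0}(I_\beta+J_{\beta\alpha}).
$$
The monotonicity (\ref{mono}) is used precisely here: on $\Gamma_\alpha^\pm$ the absolute values can be removed with a known sign, which avoids any absolute values of random quantities.

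Then take expectations and multiply by $\pi_\alpha$, using the identity
$$
\pi_\alpha\BE J_{\beta\alpha}=\pi_\alpha\BP(I_\beta=1\mid I_\alpha=1)=\BE(I_\alpha I_\beta).
$$
This turns each term into a familiar quantity:
\begin{itemize}
\item the first term gives $\pi_\alpha\BE I_\alpha=\pi_\alpha^2$;
\item each $\beta\in\Gamma_\alpha^+$ gives $\BE(I_\alpha I_\beta)-\pi_\alpha\pi_\beta=\cov(I_\alpha,I_\beta)\geq 0$;
\item each $\beta\in\Gamma_\alpha^-$ gives $-\cov(I_\alpha,I_\beta)=|\cov(I_\alpha,I_\beta)|$, since that covariance is nonpositive;
\item each $\beta\in\Gamma_\alpha^0$ gives $\pi_\alpha\pi_\beta+\BE(I_\alpha I_\beta)$.
\end{itemize}
Summing over $\alpha$, multiplying by $\|\Delta g\|$, and taking the supremum over $A$ yields the stated bound on $\dtv(\calL(W),\Po(\lambda))$.

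The main obstacle is the Stein-factor estimate $\|\Delta g_A\|\leq(1-e^{-\lambda})/\lambda$; everything else is bookkeeping. I would not reprove it and would simply cite it from \cite{BHJ}. If a proof were required, I would first show that, for singletons $A=\{j\}$, $\Delta g_{\{j\}}(m)$ is positive only at $m=j$. Then I would bound $\sum_j\Delta g_{\{j\}}(j)$ via the explicit formula for $g_{\{j\}}$, which gives the $(1-e^{-\lambda})/\lambda$ bound.
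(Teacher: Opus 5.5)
Your proof is correct and is exactly the standard argument in \cite{BHJ} that the paper cites without reproducing: the Stein identity and the coupling give the bound $\frac{1-e^{-\lambda}}{\lambda}\sum_\alpha\pi_\alpha\BE|W-V_\alpha|$, and monotonicity turns the expectations into covariances. Your reading of (\ref{mono}) as $J_{\beta\alpha}\geq I_\beta$ and $J_{\beta\alpha}\leq I_\beta$ is also the right correction of the paper's typo.

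One slip in your optional aside on the Stein factor: you must bound $\Delta g_{\{m\}}(m)$ for each $m$ separately, using $\Delta g_A(m)\leq\Delta g_{\{m\}}(m)$ and $g_{A^c}=-g_A$. Bounding the sum $\sum_j\Delta g_{\{j\}}(j)$ cannot work, since that sum diverges ($\Delta g_{\{j\}}(j)\sim 1/j$ as $j\to\infty$).
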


Using Theorem~\ref{SC}, we obtain
\begin{theorem}\label{mythm}
\eq\label{mybound}
\dtv(\calL(W),\Po(\lambda)\leq\frac{1-e^{-\lambda}}{\lambda}
\left(\sum_{i=1}^k \frac{a_i^3}{n^2}+
\sum_{i=1}^k\frac{a_i^2(a_i-1)}{n(n-1)}\left(1-\frac{a_i}{n}\right)
+\frac{2}{n^2(n-1)}\sum_{1\leq i<j\leq k}a_i^2a_j^2\right),
\en
where $\lambda$ is given by (\ref{lambdaexp}).
\end{theorem}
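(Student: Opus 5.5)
We apply Theorem~\ref{SC} with $\Gamma$ as in (\ref{Gammadef}) and $I_\alpha=I_{i,j}$ as in (\ref{Iijdef}). The aim is a coupling for which $\Gamma_\alpha^0=\emptyset$, $\Gamma_\alpha^-$ consists of the other positions carrying the same letter $i$, and $\Gamma_\alpha^+$ consists of the positions carrying letters different from $i$. The bound (\ref{mybound}) then comes from evaluating the three surviving sums with the covariances computed in the proof of Lemma~\ref{var}.

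\emph{Coupling.} Fix $\alpha=(i,j)$ and let $\pi$ be a uniform permutation of $M_\ba$. If $\pi$ already has an $i$ at position $(i,j)$, set $\pi'=\pi$. Otherwise $\pi$ has some letter $l\neq i$ at $(i,j)$. Choose uniformly one of the positions where $\pi$ has an $i$ that are not fixed points (positions $(i,j')$ holding $i$ are excluded), and swap its contents with position $(i,j)$, giving $\pi'$.

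We must check that $\pi'$ is uniform among permutations with an $i$ at $(i,j)$. The conditional law equals the law of a uniform permutation of $M_\ba$ minus one copy of $i$, placed on the remaining positions. The cleanest route is to pick uniformly one of the $a_i$ copies of $i$ (with copies labelled) and swap it into $(i,j)$. This is the standard size-biasing and is exact: labelled permutations are uniform, and the map sending (permutation, chosen copy) to the swapped arrangement is $a_i$-to-one onto the conditional set, up to counting. A direct counting check confirms it. Choosing among all copies of $i$ makes the coupling valid but possibly creates worse monotonicity. We therefore refine: if the chosen copy sits at a position $(i,j')$, swapping it destroys a fixed point there. Exactness is what matters most, so we accept the uniform choice over all copies and verify monotonicity afterwards.

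\emph{Monotonicity.} Put $J_{\beta\alpha}=I_\beta(\pi')$. For $\beta=(i_2,j_2)$ with $i_2\neq i$, the swap moves an $i$ into $(i,j)$ and the displaced letter $l$ into the former position $p$ of the chosen $i$. Position $\beta$ can only change if $\beta=p$. But $p$ held an $i\neq i_2$, so $I_\beta=0$ before the swap. Hence $J_{\beta\alpha}\ge I_\beta$, and $\beta\in\Gamma_\alpha^+$. For $\beta=(i,j')$, position $\beta$ can only change if $\beta=p$. It then held $i$ and now holds $l\neq i$, so $J_{\beta\alpha}\le I_\beta$, and $\beta\in\Gamma_\alpha^-$. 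Thus $\Gamma_\alpha^0=\emptyset$.

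\emph{Summation.} We have $\sum_\alpha\pi_\alpha^2=\sum_i a_i(a_i/n)^2=\sum_i a_i^3/n^2$. The $\Gamma^-$ sum is $\sum_i\sum_{j_1\neq j_2}|\cov(I_{i,j_1},I_{i,j_2})|$. By (\ref{covterms1}) these covariances are negative, so this sum equals the absolute value of (\ref{covterms1}), namely $\sum_i\frac{a_i^2(a_i-1)}{n(n-1)}(1-a_i/n)$. The $\Gamma^+$ sum is the cross term (\ref{covterms2}), whose covariances are positive, giving $\frac{2}{n^2(n-1)}\sum_{i<j}a_i^2a_j^2$. Substituting these into Theorem~\ref{SC} yields (\ref{mybound}).

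The main obstacle is the coupling step: proving that the swap produces exactly $\calL(\cdot\mid I_\alpha=1)$, together with the case analysis showing that every $\beta$ moves in only the claimed direction. I would verify exactness by directly counting preimages of each target arrangement.
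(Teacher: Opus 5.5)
Your final argument is the paper's proof: the same coupling (if position $(i,j)$ does not hold $i$, swap it with a uniformly chosen copy among all $a_i$ copies of $i$), the same partition $\Gamma_{i,j}^+=\{(i',j'):i'\neq i\}$, $\Gamma_{i,j}^-=\{(i,j'):j'\neq j\}$, $\Gamma_{i,j}^0=\emptyset$, and the same substitution of (\ref{covterms1}) and (\ref{covterms2}) into Theorem~\ref{SC}. Two corrections on the exactness check you deferred: the swap map is $n$-to-one, not $a_i$-to-one (a target $\sigma$ arises from itself under all $a_i$ choices and from $n-a_i$ transpositions under one choice each, giving the paper's $\frac{1}{N}\bigl(1+\frac{n-a_i}{a_i}\bigr)=1/\binom{n-1}{a_1,\ldots,a_i-1,\ldots,a_k}$ with $N=\binom{n}{a_1,\ldots,a_k}$), and your first variant, which excludes copies of $i$ at positions $(i,j')$, is genuinely not exact (for $\{1^2,2\}$ and $\alpha=(1,1)$ it gives $112$ probability $2/3$ instead of $1/2$), so dropping it was necessary.
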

\proof
The index set $\Gamma$ is given by (\ref{Gammadef}) and the indicators $I_{i,j}$ 
are defined by (\ref{Iijdef}), with 
\eq\label{piijdef}
\pi_{i,j}=a_i/n
\en
 from (\ref{Iijexp}). We must construct the coupling
$J_{\beta\alpha}$. Suppose $\alpha=(i,j)$. If the permutation generating the $I_\alpha$ contains a copy of $i$ in the $(i,j)$th position, then we do nothing. Suppose, instead, the permutation contains a copy of some other number $i^\ast$ in the $(i,j)$th position. We then switch that copy of $i^\ast$ with a randomly chosen copy of $i$. Let us check that this is a proper coupling. Let $\sigma$ be any permutation with $i$ in the $(i,j)$th position. 
The probability $\sigma$ is chosen directly is $\frac{1}
{\binom{n}{a_1,\ldots, a_k}}$. The probability $\sigma$ is chosen after a switching is obtained by choosing a permutation $\sigma^\prime$ obtained by switching the $i$ in the $(i,j)$th position with any copy of any number except for $i$, and then switching that $i$
back to the $(i,j)$th position. The number of $\sigma^\prime$ is $n-a_i$ and the probability of switching back is $1/a_i$. Therefore, the probability of generating $\sigma$ is
$$
\frac{1}
{\binom{n}{a_1,\ldots, a_k}} + \frac{1}
{\binom{n}{a_1,\ldots, a_k}} \frac{n-a_i}{a_i}=
\frac{1}
{\binom{n-1}{a_1,\ldots,a_i-1,\ldots, a_k}},
$$
as is required.
It is easily checked that we may take
\begin{eqnarray*}
\Gamma_{i,j}^+&=&\{(i^\prime, j^\prime)\in\Gamma: i^\prime\neq i\}\\
\Gamma_{i,j}^-&=&\{(i, j^\prime)\in\Gamma: j^\prime\neq j\}\\
\Gamma_{i,j}^0&=&\emptyset.
\end{eqnarray*}
Putting (\ref{covterms1}), (\ref{covterms2}) and (\ref{piijdef}) in Theorem~\ref{SC}
results in the stated bound.
\qed

\begin{corollary}\label{rate}
For any sequence $\ba=\ba(n)$ satisfying (\ref{assump}), 
\eq\label{corbound}
\dtv(\calL(W),\Po(\lambda)=O\left(\sum_{i=1}^k \frac{a_i^3}{n^2}\right)
=O\left(\theta\lambda/n\right)=O(\lambda^{3/2}n^{-1/2}).
\en
\end{corollary}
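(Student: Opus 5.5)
Starting from Theorem~\ref{mythm}, we bound each of the three terms inside the parentheses of (\ref{mybound}) by a constant multiple of $\sum_{i=1}^k a_i^3/n^2$, and use $(1-e^{-\lambda})/\lambda\le 1$, valid for all $\lambda>0$. (One could also use $\lambda\ge 1$ to gain a factor $1/\lambda$, but that is not needed for the stated $O$-bound.)

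\textbf{First two terms.} The first term is already $\sum_i a_i^3/n^2$. For the second term, note that $a_i-1\le a_i$, that $1-a_i/n\le 1$, and that $n/(n-1)\le 2$ for $n\ge 2$. Hence
$$
\frac{a_i^2(a_i-1)}{n(n-1)}\left(1-\frac{a_i}{n}\right)\le \frac{2a_i^3}{n^2}.
$$

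\textbf{Third term.} This is the only step needing a small idea. We have
$$
\frac{2}{n^2(n-1)}\sum_{1\leq i<j\leq k}a_i^2a_j^2 \le \frac{1}{n^2(n-1)}\Bigl(\sum_{i=1}^k a_i^2\Bigr)^2.
$$
For the square, apply Cauchy--Schwarz in the form $a_i^2=a_i^{1/2}\cdot a_i^{3/2}$:
$$
\Bigl(\sum_i a_i^2\Bigr)^2\le \Bigl(\sum_i a_i\Bigr)\Bigl(\sum_i a_i^3\Bigr)=n\sum_i a_i^3.
$$
Thus the third term is at most $\frac{n}{n^2(n-1)}\sum_i a_i^3\le 2\sum_i a_i^3/n^2$. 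Alternatively, the route of Lemma~\ref{simlam} works: bound one factor $\sum_i a_i^2$ by $\theta n$ and the other by $n\lambda$. This yields $O(\theta\lambda/n)$ directly, which is also enough. Combining the three bounds gives $\dtv(\calL(W),\Po(\lambda))\le 5\sum_i a_i^3/n^2$ for $n\ge2$; the case $n=1$ is trivial.

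\textbf{Remaining equalities.} Since $a_i^3\le\theta a_i^2$, we get
$$
\sum_i \frac{a_i^3}{n^2}\le \frac{\theta}{n}\sum_i\frac{a_i^2}{n}=\frac{\theta\lambda}{n},
$$
by Lemma~\ref{exp}. Finally, (\ref{thetabound}) gives $\theta\le\sqrt{n\lambda}$, so $\theta\lambda/n\le \lambda^{3/2}n^{-1/2}$.

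None of these steps is a real obstacle. The only point requiring care is the cross term, which must be reduced either to $\sum_i a_i^3$ via Cauchy--Schwarz or to $\theta\lambda/n$; all the constants involved are absolute.
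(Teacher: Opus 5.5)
Your proof is correct and follows the paper's overall plan: bound each term of (\ref{mybound}) after discarding $(1-e^{-\lambda})/\lambda\le 1$, then use $a_i^3\le\theta a_i^2$ and (\ref{thetabound}). The two arguments differ in the cross term.

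The paper bounds the cross term by $\lambda^2/(n-1)$ and compares this only with $\lambda^{3/2}n^{-1/2}$, noting it is of smaller order when $\lambda=o(n)$. As written, the paper therefore never checks that the cross term is $O(\sum_{i=1}^k a_i^3/n^2)$ or $O(\theta\lambda/n)$, which the first two members of the chain require. Your Cauchy--Schwarz step $(\sum_i a_i^2)^2\le n\sum_i a_i^3$, i.e.\ $\lambda^2\le\frac1n\sum_i a_i^3$, supplies exactly this. It holds unconditionally and with an explicit constant, so your version proves the full chain for every sequence. It also shows that hypothesis (\ref{infassump}) of Theorem~\ref{lowermy} is a little-$o$ strengthening of an inequality that always holds with constant $1$.

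Two small remarks. First, your alternative via $\sum_i a_i^2\le\theta n$ (equivalently $\lambda\le\theta$) gives only the two weaker members of the chain, not $O(\sum_i a_i^3/n^2)$. So it is the Cauchy--Schwarz route that actually establishes the statement as written, and calling the alternative ``also enough'' overstates it. Second, for the second term the paper's bound is slightly sharper than yours: since $(a_i-1)(n-a_i)\le a_i(n-1)$, that term is at most $a_i^3/n^2$ without the factor $2$. This changes only constants.
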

\proof
%We know $\theta=O(\sqrt{n})$ by (\ref{thetabound}). 
The first two terms in (\ref{mybound}) are bounded by
$$
\sum_{i=1}^k \frac{a_i^3}{n^2}\leq \frac{\theta}{n}\sum_{i=1}^k \frac{a_i^2}{n}
=\frac{\theta\lambda}{n}
$$
and then we apply (\ref{thetabound}).
%We also know that $\sum_{i=1}^k a_i^2=\lambda n$ by (\ref{lambdaexp}).
The third term in (\ref{mybound}) is bounded by
$$
\frac{2}{n^2(n-1)}\sum_{1\leq i<j\leq k}a_i^2a_j^2
\leq\frac{1}{n^2(n-1)}\left(\sum_{i=1}^k a_i^2\right)^2=\frac{\lambda^2}{n-1},
$$
where (\ref{lambdaexp}) has been used for the equality.
%and $\sum_{i=1}^k a_i^3\geq n$ because $a_1^3+a_2^3\leq(a_1+a_2)^3$.
Note that $\lambda^2/(n-1)=o(\lambda^{3/2}n^{-1/2})$ as long as $\lambda=o(n)$.
\qed

When $\lambda=o(n^{2/3})$ the bound (\ref{corbound}) is $o(1)$.
When $\lambda\asymp 1$,
the bound $O(n^{-1/2})$, in (\ref{corbound}) is slower than the rate of convergence, $n^{-1}$,
in (\ref{chen}), which is much slower than the true rate of convergence (\ref{fast}).
That $O(n^{-1/2})$ is the best we can generally obtain from
Theorem~\ref{mythm}  can be seen by considering the examples given by $a_1=\ceil[\Big]{\sqrt{(c-1)n}}, a_2=1, a_3=1,\ldots, a_{n-\ceil[\big]{\sqrt{(c-1)n}}+1}=1$, $c>1$, for which $\lambda=c+O(n^{-1/2})$.
It is immediate that $\sum_{i=1}^k \frac{a_i^3}{n^2}=(c-1)^{3/2}n^{-1/2}+O(n^{-1})$
and so the upper bound on total variation distance obtained from Theorem~\ref{mythm}
is of order
$
n^{-1/2}
$
for these examples.

\section{Lower bounds}\label{lower}

In Example~3.4.2 of \cite{BHJ} it is pointed out that $\lambda=\sigma^2=1$ for permutations
and therefore the lower bounds on total variation distance in Chapter~3 of \cite{BHJ} can not be applied. We will show that one of those lower bounds can often be applied 
in the case of multiset permutations. The theorem on lower bounds we will use follows.

\begin{theorem}[Theorem~3.A of \cite{BHJ}]\label{lowerthm}
Given random variable $W$ taking values in $\BZ_+$ with
$\sigma^2={\rm Var}W<\BE W=\lambda$, we have
\eq\label{lower}
\dtv(\calL(W),\Po(\lambda)\geq
c\varepsilon\left(1+\log(\varepsilon^{-1}\right)^{-1}
\left\{1\wedge(\lambda\vee 1)\left(1+\log(\varepsilon^{-1}\right)^{-1}\right\},
\en
where $c>0$ is a universal constant,
\eq\label{varepsdef}
\varepsilon=(1\wedge\lambda)\left|\frac{\sigma^2}{\lambda}-1\right|,
\en
and $\wedge$ and $\vee$ denote minimum and maximum, respectively.
\end{theorem}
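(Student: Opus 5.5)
Since this is Theorem~3.A of \cite{BHJ}, the paper will simply cite it. If I were to prove it myself, I would use the Stein characterisation of the Poisson law with a truncated linear test function.

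\textbf{Setting up the comparison.} For $Z\sim\Po(\lambda)$ and any bounded $g$ we have $\BE[\lambda g(Z+1)-Zg(Z)]=0$. So if $h_g(j):=\lambda g(j+1)-jg(j)$ is bounded, then
$$|\BE h_g(W)|=|\BE h_g(W)-\BE h_g(Z)|\le 2\sup_j|h_g(j)|\,\dtv(\calL(W),\Po(\lambda)).$$
The heuristic choice is $g(j)=j-\lambda$. It gives exactly
$$\BE h_g(W)=\lambda\BE(W+1-\lambda)-\BE W(W-\lambda)=\lambda-\sigma^2,$$
and $\lambda-\sigma^2$ equals $\lambda\varepsilon/(1\wedge\lambda)$ by (\ref{varepsdef}). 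Because this $g$ is unbounded, I would damp it:
$$g(j)=(j-\lambda)\exp\!\left(-\frac{(j-\lambda)^2}{\theta\lambda}\right),$$
with a parameter $\theta\ge1$ to be tuned.

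\textbf{Step 1: bound $\sup_j|h_g(j)|$.} I would write $h_g(j)=\lambda(g(j+1)-g(j))-(j-\lambda)g(j)$. A direct calculus estimate gives $|g(j+1)-g(j)|\le C$ and $|(j-\lambda)g(j)|\le C\theta\lambda$. Hence $\sup_j|h_g(j)|\le C\lambda\theta$ when $\lambda\ge1$. A similar bound, with $\lambda$ replaced by $1\wedge\lambda$ after rescaling, should handle the case $\lambda<1$.

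\textbf{Step 2: lower-bound $\BE h_g(W)$.} I would expand the exponential using $1-x\le e^{-x}\le 1$ and compare with the linear case. This should give
$$\BE h_g(W)\ge(\lambda-\sigma^2)-\frac{C}{\theta\lambda}\,\BE\!\left[|W-\lambda|^3(1+\lambda)\wedge(\cdots)\right],$$
where the error comes only from the region $|W-\lambda|\gtrsim\sqrt{\theta\lambda}$. I expect this to be the \textbf{main obstacle}. Only the first two moments of $W$ are given, and $\sigma^2<\lambda$ controls the mass far from $\lambda$ only through Chebyshev. So rather than using third moments, I would use the Gaussian factor itself to bound the tail contribution. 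The tail term $\BE[(W-\lambda)^2\mathbf 1\{|W-\lambda|>\sqrt{\theta\lambda}\}]$ would be handled by comparing the Poisson and $W$ expectations once more, at the cost of another $\dtv$ term; this is absorbed by rearranging, with an extra error of order $e^{-c\theta}$ times $\lambda$.

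\textbf{Step 3: choose $\theta$.} I would take $\theta\asymp 1+\log(\varepsilon^{-1})$, so that the $e^{-c\theta}$ tail errors are a small fraction of $\lambda\varepsilon$ and Step~2 gives $\BE h_g(W)\ge\tfrac12(\lambda-\sigma^2)$. Dividing by the bound from Step~1 then yields
$$\dtv\ge c\,\varepsilon\,(1+\log\varepsilon^{-1})^{-1}.$$
When $\lambda$ is large the damping can be weakened; the truncation loss in Step~1 then becomes $\min\{1,(\lambda\vee1)/(1+\log\varepsilon^{-1})\}$, which produces the factor in braces in (\ref{lower}). The bookkeeping in the regime $\lambda<1$, where the factor $1\wedge\lambda$ enters, is routine once the $\lambda\ge1$ case is done.
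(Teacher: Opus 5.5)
The paper gives no proof of this statement; it is quoted from Barbour, Holst and Janson. Your Step~1 is correct, but Step~2 fails, and Step~3 depends on it. The error in Step~2 is not confined to the region $|W-\lambda|\gtrsim\sqrt{\theta\lambda}$: the Gaussian damping changes $h_g$ throughout the bulk. Write $x=j-\lambda$. For $|x|\lesssim\sqrt{\lambda}$ and $\lambda$ large,
\[
h_g(j)\approx e^{-x^2/(\theta\lambda)}\Bigl(\lambda-\bigl(1+\frac{2}{\theta}\bigr)x^2-\frac{3x+1}{\theta}\Bigr).
\]
Hence $h_g(j)-(\lambda-x^2)\approx\theta^{-1}(x^4/\lambda-3x^2-3x-1)$, and for $W$ living in the bulk
$\BE h_g(W)\approx(\lambda-\sigma^2)+\theta^{-1}\bigl(\lambda^{-1}\BE(W-\lambda)^4-3\sigma^2-1\bigr)$.
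The correction term vanishes for the Poisson, whose fourth central moment is $3\lambda^2+\lambda$. In general it depends on $\BE(W-\lambda)^4$, which the two given moments do not control. Take $W$ supported on the integers adjacent to $\lambda\pm\sigma$, with the prescribed mean and variance. Then the correction is about $-2\lambda/\theta$, so $\BE h_g(W)\approx\lambda\varepsilon-2\lambda/\theta<0$ as soon as $\varepsilon\lesssim1/\theta$. So with $\theta\asymp1+\log\varepsilon^{-1}$ the inequality $\BE h_g(W)\ge\frac12(\lambda-\sigma^2)$ is false in general. Forcing it from two moments alone needs $\theta\gtrsim\varepsilon^{-1}$, and then Step~1 gives at best $\dtv\gtrsim\varepsilon^2$. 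The error is of order $\lambda/\theta$, not $e^{-c\theta}\lambda$, so comparing only the tail with the Poisson cannot absorb it. Your explanation of the factor in braces is also backwards: that factor equals $1$ when $\lambda\ge1+\log\varepsilon^{-1}$, and it is an extra loss when $\lambda$ is small, not large.

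You could repair the argument by comparing the bulk discrepancy with its Poisson counterpart too, at the cost of a $\dtv$ term with coefficient $\asymp\lambda+t^2$. But then $h_g$ and $\theta$ do no work, and it is cleaner to use a bounded test function directly. Let $Z\sim\Po(\lambda)$, $L=1+\log\varepsilon^{-1}$ and $f_t(j)=(j-\lambda)^2\wedge t^2$. Since $0\le f_t\le t^2$, $\BE f_t(W)\le\sigma^2$ and $\BE f_t(Z)\ge\lambda-\BE[(Z-\lambda)^2\mathbf 1\{|Z-\lambda|>t\}]$, you get $t^2\,\dtv(\calL(W),\Po(\lambda))\ge\lambda-\sigma^2-\BE[(Z-\lambda)^2\mathbf 1\{|Z-\lambda|>t\}]$. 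Here $\lambda-\sigma^2=\varepsilon(\lambda\vee1)$. Bernstein's bound $\BP(|Z-\lambda|\ge s)\le2\exp\{-s^2/(2\lambda+2s/3)\}$ shows that $t=C(\sqrt{\lambda L}\vee L)$, with $C$ a large absolute constant, makes the tail term at most $\frac12\varepsilon(\lambda\vee1)$. Hence $\dtv\ge\varepsilon(\lambda\vee1)/\{2C^2(\lambda L\vee L^2)\}$, which is exactly $c\,\varepsilon L^{-1}\{1\wedge(\lambda\vee1)L^{-1}\}$ both for $\lambda\ge1$ and for $\lambda<1$. In this argument the hypothesis $\sigma^2<\lambda$ enters only through the one-sided bound $\BE f_t(W)\le\sigma^2$. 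The logarithm comes from the truncation level $t$. The braced factor reflects the exponential, rather than Gaussian, Poisson tail at deviations $t\gtrsim\lambda$.
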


We will apply Theorem~\ref{lowerthm} to our situation.
\begin{theorem}\label{lowermy}
Suppose that (\ref{assump}) holds and
\eq\label{infassump}
\lambda^2=o\left(\frac{1}{n}\sum_{i=1}^k a_i^3\right).
\en
Then, (\ref{lower}) holds for $n$ large enough and 
\eq\label{varepsasym}
\varepsilon\sim (\lambda^{-1}\wedge 1)\frac{2}{n^2}\sum_{i=1}^k a_i^3.
\en
%Moreover, there exists a constant $c^\prime$ such that
%\eq\label{both}
%c^\prime \varepsilon[\log(\varepsilon^{-1})]^{-2}
%\leq\dtv(\calL(W),\Po(\lambda))
%=O(\varepsilon).
%\en
\end{theorem}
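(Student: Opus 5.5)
The plan is to compute $\lambda-\sigma^2$ exactly from Lemma~\ref{exp} and Lemma~\ref{var}. We then show that, under (\ref{infassump}), it is positive for large $n$ and asymptotic to $\frac{2}{n^2}\sum_i a_i^3$. Once $\sigma^2<\lambda$ is known, Theorem~\ref{lowerthm} applies and gives (\ref{lower}). Since $\varepsilon=(1\wedge\lambda)(\lambda-\sigma^2)/\lambda=(\lambda^{-1}\wedge 1)(\lambda-\sigma^2)$, the asymptotic (\ref{varepsasym}) follows immediately.

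For the computation, write $S_2=\sum_i a_i^2=n\lambda$, $S_3=\sum_i a_i^3$, $S_4=\sum_i a_i^4$. Expand the first sum in Lemma~\ref{var}:
$$
\frac{a_i^2}{n}\Bigl(1-\frac{a_i}{n}\Bigr)\Bigl(1-\frac{a_i-1}{n-1}\Bigr)
=\frac{a_i^2}{n}-\frac{a_i^3}{n^2}-\frac{a_i^2(a_i-1)}{n(n-1)}+\frac{a_i^3(a_i-1)}{n^2(n-1)}.
$$
For the cross term use $2\sum_{i<j}a_i^2a_j^2=S_2^2-S_4$. This gives
$$
\lambda-\sigma^2=\frac{S_3}{n^2}+\frac{S_3-S_2}{n(n-1)}-\frac{S_4-S_3}{n^2(n-1)}-\frac{S_2^2-S_4}{n^2(n-1)}
=\frac{S_3}{n^2}+\frac{S_3-S_2}{n(n-1)}+\frac{S_3}{n^2(n-1)}-\frac{\lambda^2}{n-1}.
$$
The $S_4$ terms cancel exactly. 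This cancellation is the point to double-check, since otherwise a term of size $S_4/n^3$ would compete with $S_3/n^2$.

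It remains to show that every term other than $\frac{S_3}{n^2}+\frac{S_3}{n(n-1)}\sim\frac{2S_3}{n^2}$ is $o(S_3/n^2)$.
\begin{itemize}
\item The term $\lambda^2/(n-1)$ is $o(S_3/n^2)$ precisely by (\ref{infassump}).
\item The term $S_3/(n^2(n-1))$ is trivially $o(S_3/n^2)$.
\item The term $S_2/(n(n-1))=\lambda/(n-1)$ needs an argument. By Cauchy--Schwarz, $S_2^2\le S_1S_3=nS_3$, i.e.\ $\lambda^2\le S_3/n$. Hence $\lambda/n\le S_3/(n^2\lambda)$. If $\lambda\to\infty$ along the sequence, this is $o(S_3/n^2)$. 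Since $\lambda\ge1$, the other case is handled differently: (\ref{infassump}) gives $S_3/n\to\infty$, so $\lambda/n\le\lambda^2/n=o(S_3/n^2)$. In fact $\lambda\le\lambda^2$ always holds, so the single bound $\lambda/(n-1)\le\lambda^2/(n-1)=o(S_3/n^2)$ covers both cases.
\end{itemize}

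Therefore $\lambda-\sigma^2=\frac{2S_3}{n^2}(1+o(1))$. This is positive for $n$ large enough, so $\sigma^2<\lambda$ and Theorem~\ref{lowerthm} applies. Multiplying by $(\lambda^{-1}\wedge1)$ gives (\ref{varepsasym}).

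The main obstacle is the bookkeeping in the exact expansion: verifying the cancellation of the $S_4$ terms and tracking the $n$ versus $n-1$ denominators. The asymptotic comparisons are immediate from (\ref{infassump}) together with $\lambda\ge1$.
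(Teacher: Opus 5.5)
Your proof is correct, and it takes a somewhat different and simpler route than the paper's. Both arguments compute $\lambda-\sigma^2$ from Lemmas~\ref{exp} and~\ref{var} and then invoke Theorem~\ref{lowerthm}. The difference is that the paper does not use the exact cancellation of the quartic terms.

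The paper proceeds as follows:
\begin{itemize}
\item It first shows that the hypothesis $\lambda^2=o(n^{-1}\sum_i a_i^3)$ forces $\lambda=o(n)$, via the bound $\sum_i a_i^3\le(\sum_i a_i^2)^{3/2}$.
\item It then deduces $\theta=o(n)$ from $\theta\le\sqrt{n\lambda}$.
\item It uses $\theta=o(n)$ to absorb $\sum_i a_i^3(a_i-1)/(n^2(n-1))$ into a factor $(1-\theta/n)$ in front of $\sum_i a_i^3/n^2$.
\item It bounds the cross term crudely by $2\lambda^2/(n-1)$.
\end{itemize}
That yields only the lower bound $\lambda-\sigma^2\ge 2(1-o(1))\sum_i a_i^3/n^2$. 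The matching upper bound needed for $\sim$ is left implicit; it holds because the discarded terms are nonpositive.

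Your exact identity simplifies further to
$$\lambda-\sigma^2=\frac{2S_3-n\lambda-n\lambda^2}{n(n-1)}.$$
Both directions of the asymptotic then follow at once from $n\lambda\le n\lambda^2=o(S_3)$, using only the hypothesis and $\lambda\ge 1$. So the preliminary step ($\lambda=o(n)$, hence $\theta=o(n)$) becomes unnecessary. The Cauchy--Schwarz case split in your third bullet can simply be dropped, as you note.

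What the paper's route buys, as a by-product, is the fact that we are in the regime $\lambda=o(n)$ of Lemma~\ref{simlam}. What yours buys is an exact criterion valid for every $\ba$ with $n>1$: $\sigma^2<\lambda$ if and only if $2S_3>n\lambda(1+\lambda)$.
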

\proof
For fixed $k\geq 1$, consider the constrained optimisation problem:
$$
{\rm maximise} 
\sum_{i=1}^k x_i^3
{\rm \ \ subject \ to \ }
\sum_{i=1}^k x_i^2=c, \quad c>0{\rm \ constant}, x_i\in\BR \ \forall i\in [k].
$$
A Lagrange multiplier argument shows that the only extremal solution to this problem
with $x_i>0 \ \forall i\in [k]$ is $\tilde x_i=\sqrt{c/k} \ \forall i\in [k]$, for which
$\sum_{i=1}^k \tilde x_i^3=k^{-1/2}c^{3/2}$. Thus, $\sum_{i=1}^k x_i^3\leq c^{3/2}$ for all $k$ and all $x_i$ satisfying $\sum_{i=1}^k x_i^2=c$. Therefore, if $\lambda=\frac{1}{n}\sum_{i=1}^k a_i^2=cn$
for $c\in(0,1]$, then
$
\frac{1}{n}\sum_{i=1}^k a_i^3\leq c^{3/2} n^2.
$
As a result of $c^{3/2}\leq c^2$, if (\ref{infassump}) holds, then it must be the case that
$\lambda=o(n)$.

We know that $\theta=o(n)$ from (\ref{thetabound}) and $\lambda=o(n)$.
By Theorem~\ref{exp} and Theorem~\ref{var}, we have
\begin{eqnarray}\label{explan}
\lambda-\sigma^2&=&\sum_{i=1}^k\frac{a_i^3}{n^2}
+\sum_{i=1}^k\frac{a_i^2(a_i-1)}{n(n-1)}
-\sum_{i=1}^k\frac{a_i^3(a_i-1)}{n^2(n-1)}
-\frac{2}{n^2(n-1)}\sum_{1\leq i<j\leq k}a_i^2a_j^2\nonumber\\
&\geq&
2\left(1-\frac{\theta}{n}\right)\sum_{i=1}^k\frac{a_i^3}{n^2}
-\sum_{i=1}^k\frac{(n-a_i)a_i^2}{n^2(n-1)}
-\frac{2}{n^2(n-1)}\left(\sum_{i=1}^ka_i^2\right)^2\nonumber\\
&\geq&
2(1-o(1))\sum_{i=1}^k\frac{a_i^3}{n^2}
-\frac{\lambda}{n}
-\frac{2\lambda^2}{n-1}\nonumber\\
&=&
2(1-o(1))\sum_{i=1}^k\frac{a_i^3}{n^2},
\end{eqnarray}
where (\ref{infassump}) has been used at (\ref{explan}).
Now, (\ref{varepsasym}) follows immediately from (\ref{varepsdef}).
By (\ref{explan}), $\sigma^2<\lambda$ for large enough $n$ and (\ref{lower}) 
is applicable.
\qed

%We will apply Theorem~\ref{lowermy} to an example, showing that we do not generally have convergence as fast as in (\ref{fast}).
Let $a_1=\ceil[\Big]{\sqrt{(c-1)n}}, a_2=1, a_3=1,\ldots, a_{n-\ceil[\big]{\sqrt{(c-1)n}}+1}=1$,
$c>1$. We saw in
Section~\ref{poisson} that the upper bound in (\ref{corbound})
is of order $O(n^{-1/2})$. Clearly, $\lambda=o(n)$, (\ref{infassump}) holds
and (\ref{varepsasym}) gives us $\varepsilon\sim 2(c^{-1}\wedge 1)(c-1)^{3/2}n^{-1/2}$. Thus, the lower bound (\ref{lower}) is of order
$c^\prime n^{-1/2}/\log^2 n$ for some constant $c^\prime>0$.

%We leave unresolved the question as to whether $\sigma^2<\lambda$ whenever $a_i>1$ for some $i\in[k]$. 

\end{document}